\numberwithin{equation}{section}
\newtheorem{thm}[subsection]{Theorem}
\newtheorem{lem}[subsection]{Lemma}
\newtheorem{cor}[subsection]{Corollary}
\newtheorem{prop}[subsection]{Proposition}
\newtheorem{obs}[subsection]{Observation}
\theoremstyle{definition}
\newtheorem{definition}[subsubsection]{Definition}
\newtheorem{rem}[subsubsection]{Remark}
\newtheorem{eg}[subsubsection]{Example}
\newcommand{\R}{\mathbb{R}}
\newcommand{\Z}{\mathbb{Z}}
\newcommand{\C}{\mathbb{C}}
\newcommand{\US}{\mathbf{S}}
\def\eob{\hfill\qedsymbol}
\def\cmc{\textsc{cmc}}
\def\b{\beta}
\def\g{\gamma}
\def\G{\Gamma}
\def\i{\mathrm{i}}
\def\th{\theta}
\def\Th{\Theta}
\def\eps{\varepsilon}
\def\ff{\phi} 
\def\ggc{$\gg$-crowded}
\def\ds{\displaystyle}
\def\div#1{\mathrm{div_{\Sigma}\left(#1\right)}}
\def\Div#1{\mathrm{div_{N}\left(#1\right)}}
\def\O{\mathcal{O}}
\def\cy{\mathcal{Z}}
\def\bo{\mathcal{B}}
\def\cs{\mathcal{C}}
\def\rc{\mathcal{K}}
\def\ch{\mathcal{S}}
\def\gn{\mathcal I}
\def\gg{\mathcal G}
\def\gs{\mathcal L}
\def\lgn{L(\mathcal{I})}
\def\lgs{L(\mathcal{L})}
\def\c{\mathbf{k}}
\def\bd{\partial}
\def\lra{\longrightarrow}
\def\mv#1{\left|#1\right|_{\mu}}
\def\and{\quad\text{and}\quad}
\def\l{\left}
\def\r{\right}
\newcounter{mnotecount}[page]
\begin{document}

\parskip 6pt
\parindent 0pt
\baselineskip 15pt

\title[CMC, flux conservation \& symmetry]{Constant mean curvature, flux conservation, and symmetry}

\author{Nick Edelen \& Bruce Solomon}
\address{
	Stanford University, Stanford CA (Edelen) \hfill\break 
	Indiana University, Bloomington, IN (Solomon)
}

\date{Begun August, 2011. Latest draft \today.}

\begin{abstract}
	As first noted in \cite{kks}, constant mean curvature implies a homological conservation 
	law for hypersurfaces in ambient spaces with Killing fields.
	In Theorem \ref{thm:law} here, we generalize that law by relaxing the topological 
	restrictions assumed in \cite{kks},
	and by allowing a weighted mean curvature functional. 
	We also prove a partial converse (Theorem \ref{thm:converse}) which roughly
	says that when flux is conserved 
	along a Killing field, a hypersurface splits into two regions: one 
	with constant (weighted) mean curvature, and one preserved by the Killing field. 
	We demonstrate our theory by using it to derive a first integral for 
	helicoidal surfaces of constant mean curvature in $\,\R^{3}$, i.e., ``twizzlers.''
\end{abstract} 

\maketitle

\section{Introduction}
		
	Constant mean curvature (``\cmc'') imposes a homological flux conservation law on
	hypersurfaces in ambient spaces with non-trivial Killing fields. 
	This was first observed and exploited by Korevaar, Kusner, \& Solomon in their 1989 paper on 
	the structure of embedded \cmc\ surfaces in $\,\R^{3}\,$ \cite{kks} (see \cite{k} for an
	alternative exposition). In Theorem \ref{thm:law} here, 
	we generalize that law by relaxing the topological restrictions assumed by \cite{kks},
	and by allowing a weighted version of the mean curvature functional. 
	We further extend the theory via Theorem \ref{thm:converse}, which gives a partial converse
	to the conservation law. Roughly, it states that when the appropriate flux is conserved 
	along enough Killing fields, the hypersurface splits into two regions (though either may
	be empty): a region with constant (weighted) mean curvature, and a region preserved by the 
	Killing fields.
		
	We apply our results in Section \ref{eg:twiz3} by using them to quickly derive the seemingly 
	\emph{ad hoc} {first integral} 
	that Perdomo \cite{p1}, DoCarmo \& Dajczer \cite{dcd}, and others have used to analyze the moduli
	space of \cmc\  surfaces with helicoidal symmetry, also known as 
	\emph{twizzlers}\footnote{Twizzlers have also been studied by W\"underlich \cite{w} and, 
	more recently, Halldorsson \cite{h}.}.
	In general, constancy of weighted mean curvature
	is characterized by a non-linear second-order PDE, and its N\"oetherian reduction 
	to a first-order condition makes it easier to analyze.
	
	When a \cmc\ hypersurface $\,\Sigma\,$ in a manifold $\,N\,$ is preserved by the action of a continuous
	isometry group $\,\gg$, one can project it into the orbit space $\,N/\gg$.  
	The {projected} hypersurface $\,\Sigma/\gg\,$ will then be stationary for the \emph{weighted} 
	functional introduced in \S\ref{ssec:fv}. We analyze the weighted functional and the resulting
	weighted mean-curvature invariant with an eye toward this fact. We suspect that virtually all we 
	do here could be developed in a more general, stratified context encompassing both riemannian 
	manifolds, and their quotients under smooth group actions. 
	
	We stick with smooth ambient 
	manifolds here, but the orbit space viewpoint can be helpful, and our case study in 
	\S\ref{eg:twiz3} could easily have been carried out in that setting.
	The approach we demonstrate there can also be adapted to spherical and hyperbolic 
	space forms. The first author's report \cite{e} sketches out one way to do that, but 
	we describe the orbit-space approach to those examples in our final Remark 
	\ref{rem:etc}.\bigskip
		
\section{Preliminaries}
	Let $\,N\,$ denote an $n$-dimensional oriented riemannian manifold, and consider a smooth, connected, 
	oriented, properly immersed hypersurface $\,f:\Sigma^{n-1}\to N$.  We will feel free to
	write $\,\Sigma\,$ when we mean $\,f(\Sigma)\,$ or even $\,f:\Sigma\to N$, leaving context
	to clarify our intentions.
	
	Let $\,\nu\,$ denote the unit normal that completes the orientation of $\,\Sigma\,$ to that of 
	$\,N$. The \emph{mean curvature function} $\,h:\Sigma\to\R\,$ is the trace of the shape
	operator $\nabla\nu$. Notationally,
	\begin{equation}\label{eqn:h}
		h = \div{\nu}
	\end{equation}
	Here $\,\mathrm{div}_{\Sigma}(Y)\,$ denotes the \emph{intrinsic divergence} of a vectorfield 
	$\,Y\,$ along $\,\Sigma$, that is, the trace of the endomorphism $\,T\Sigma\to T\Sigma\,$ 
	gotten at each $\,p\in \Sigma\,$ by projecting the ambient covariant derivative $\,\nabla Y\,$ 
	onto $\,T_{p}\Sigma$. One may compute $\,\mathrm{div}_{\Sigma}\,$ locally using any orthonormal basis 
	$\,\{e_{i}\}\,$ for $\,T_{p}\Sigma\,$ via
	\[
		\div Y:= \sum_{i=1}^{n-1}\nabla _{e_{i}}Y\cdot\,e_{i}\ .
	\]
	\subsection{Chains and $k$-area.}\label{ssec:chains}
	The homology of the sequence
	\[
		N\lra(N,\Sigma)\lra\Sigma
	\]
	will play a role below in a way that makes it problematic to work solely with
	smooth submanifolds. We therefore work with a class of piecewise smooth objects:
	
	\begin{definition}
		A \textbf{smooth $r$-chain} (or simply \textbf{chain}) in a smooth 
		manifold $\,M\,$ is a finite union of smoothly immersed oriented $r$-dimen\-sional
		simplices. We regard a chain $X$ as a formal homological sum:
		\begin{equation}\label{eqn:chain}
			X = \sum_{i=1}^{m} m_{i}f_{i}
		\end{equation}
		Here each $\,f_{i}:\Delta\to M$, immerses the standard closed, oriented $r$-simplex 
		$\,\Delta\,$ (along with its boundary) smoothly into $\,M$.  The $m_{i}$'s are (for us)
		always integers.
		
		We denote the \textbf{support} of a chain $\,X\,$ by $\,\|X\|$. 
		
		We write $\ch_r(M)$ for the \textbf{group of smooth $r$-chains in} $M$, and $\,\bd X\,$ 
		for the homological \textbf{boundary} of a chain $\,X$, while $\cy_{i}(M)\,$ 
		and $\,\bo_{i}(M)\,$ denote the spaces of $i$-dimensional \textbf{cycles} and
		\textbf{boundaries} (kernel and image of $\,\bd\,$) in $\,M\,$ respectively. 
		Likewise $\,\cy_{i}(M,A)\,$ and $\,\bo_{i}(M,A)\,$ indicate spaces of cycles and 
		boundaries modulo a subset $\,A\subset M$. 
	\eob
	\end{definition}
	
	Integration of an $r$-form $\,\phi\,$ on $\,M\,$ over such a chain is trivial:
	\[
		\int_{X}\phi := \sum_{i=1}^{m}m_{i}\int_{\Delta}f^{*}_{i}\phi
	\]
	where $\,f_{i}^{*}\,$ denotes the usual pullback.
	
	Given a riemannian metric on $\,M$,
	one can also integrate \textit{functions} over chains, and most importantly for our purposes, compute
	weighted volumes.
	
	\begin{definition}\label{def:mass}
		Let $\,\mu:M\to\R\,$ be any continuous function.
		Define the \textbf{$\mu$-weighted $r$-volume} $\,\mv{X}\,$ of the $r$-chain $\,X\,$ 
		in (\ref{eqn:chain}) as
		
		\begin{eqnarray*}
			\mv{X}
			&:=&
			\sup\left\{
				\int_{X}e^{\mu}\phi
				\,\colon\, 
				\phi\ \text{is an $r$-form on $M$ with $\left\|\phi\right\|_{\infty}\le 1$}
			\right\}
		\end{eqnarray*}
	\eob
	\end{definition}
	
	For a single immersed simplex, the usual riemannian volume integral gives a simpler
	definition. To allow coincident, oppositely oriented simplices to cancel, however,
	we need the definition above\footnote{Definition \ref{def:mass} amounts to 
	a weighted version of the \emph{mass} of $\,X\,$ as a \emph{current}, in the sense of 
	geometric measure theory \cite[p.~358]{gmt}.}.
	
	Finally, note that because Stokes' Theorem 
	holds for immersed $r$-simplices, it holds for $r$-chains as well.
	
\subsection{Symmetry.}\label{ssec:sym}
	Our work here is vacuous unless the ambient space $\,N\,$ has non-trivial Killing fields.
	
	Write $\,\gn\,$ and $\,\lgn\,$ respectively for the isometry group of $\,N\,$ and 
	its Lie algebra. 
	Identify $\,\lgn\,$ with the linear space of Killing fields on $N$ in the usual way,
	associating each $\,Y\in\lgn\,$ with the Killing field (also called $\,Y$) we get 
	by differentiating the 	flow that sends $\,p\in N\,$ along the path $\,t\mapsto\exp(tY)\,p$.  
	We write $\,Y_{p}\,$ for the value of $\,Y\,$ at $\,p$. 
	
	One often studies \cmc\ hypersurfaces (like surfaces of revolution and twizzlers in 
	$\R^{3}$) in relation to the action of a closed, connected subgroup $\,\gs\subset\gn$.  
	Though it complicates our exposition to some extent, the presence of 
	such a subgroup $\,\gs\,$---like that of the density function $\,e^{\mu}\,$---lets
	us broaden our theory. Even when $\,\mu\equiv 0\,$ and $\,\gs\,$ is the full 
	isometry group of $\,N$, however, our results go beyond those of \cite{kks}.
		
	In Theorem \ref{thm:converse} (a converse to our Conservation Law)
	we must consider the possibility that all Killing fields associated with $\,\gs\,$ 
	lie tangent to an open subset $\,S\,$ of our hypersurface 
	$\,\Sigma\subset N$.  The following Lemma (and its Corollary) then lets us 
	deduce $\gs$-invariance of $\,S$. 
	
	\begin{lem}\label{lem:nbd}
		Suppose $\,S\subset N\,$ is a hypersurface, and that for some $\,Y\in\lgs$, 
		we have $\,Y_{p}\in T_{p}S\,$ for every $\,p\in S$.  Then for each $\,p\in S\,$, there
		exists a compact neighborhood $\,\O_{p}\subset S\,$ and an $\,\eps>0\,$ such that
		such that $\,e^{tY}q\in S\,$ whenever $\,|t|<\eps\,$ and $\,q\in\O_{p}$.
	\end{lem}
	
	\begin{proof}
		Since $\,S\,$ is a submanifold, some open set $\,W\subset N\,$ 
		contains $\,S$, but no point of  $\,\bar S\setminus S\,$ ($\bar S$ = closure of $S$). 
		Let $\,\Th:S\times\R\to N\,$ denote the flow of $\,Y\,$, so that $\,\Th(q,t):=\exp(tY)q$.
		Then $\,\Th^{-1}(W)\,$ is an open neighborhood of $\,S\times \{0\}$. 
		
		Now $\,\Th(q,t)$ parametrizes the integral curve of $\,Y\,$ with
		initial velocity $\,Y_{q}$.  But $\,Y_{p}\in T_{p}S\,$ for all $\,p\in S$, 
		and first-order ODE's have unique solutions, so this curve must stay in $S$ for all 
		$\,(q,t)\in W$.  It follows that $\,\Th^{-1}(W)=\Th^{-1}(S)$. 
		
		For any compact neighborhood  $\,\O_{p}\,$ of $\,p\in S$, there now 
		exists an $\,\eps>0\,$ such that
		 \[
		 	\O_{p}\times(-\eps,\eps)\subset \Th^{-1}(S)
		\]
		
		and the Lemma consequently holds with this choice of $\,\O_{p}$ and $\,\eps$. 
	\end{proof}\medskip

%
	
\subsection{Flux}\label{ssec:flux}
	Korevaar, Kusner, and Solomon showed in \cite{kks} that when a
	hypersurface $\,\Sigma\subset N^{n}\,$ has {constant} mean curvature $\,h\equiv H$, 
	and the homology groups $\,H_{n-1}(N)\,$ and $\,H_{n-2}(N)\,$ are both trivial (over 
	$\Z\,$ --- all homology groups in this paper have integer coefficients), there exists a \emph{flux 
	homomorphism}
	\[
		\phi:H_{n-2}(\Sigma)\otimes\lgn\to \R
	\]
	
	defined by assigning, to any Killing field $\,Y\,$ and any class 
	$\,\c\in H_{n-2}(\Sigma)$, \textbf{the flux $\,\phi(\c,Y)\,$ of $\,Y\,$ across $\c$}, where
	
	\begin{equation}\label{eqn:kks}
		\phi(\c,Y):=\int_{\G}\eta\cdot Y+ H\int_{K}\nu\cdot Y
	\end{equation}
	Here 
	\begin{itemize}
	
		\item
		$\G\,$ can be an $(n-2)$-cycle representing $\,\c\,$ \medskip
		
		\item
		$K\subset N\,$ can be any $(n-1)$-chain bounded by $\,\G\,$\medskip
		
		\item 
		$\eta\,$ is the orienting unit conormal to $\,\G\,$ in $\,\Sigma$, and \medskip
		
		\item
		$\nu\,$ is the orienting unit normal to $\,K\,$ in $\,N$.  
	\end{itemize}
	
	To ensure that $\,\phi(\c,Y)\,$ is well-defined by \eqref{eqn:kks}, 
	\cite{kks} makes two topological assumptions: namely, \emph{that $\,H_{n-1}(N)\,$ and $\,H_{n-2}(N)\,$ 
	both vanish}. 
	The vanishing of $\,H_{n-2}(N)\,$ ensures $\,\G\,$ will 
	bound \emph{some} chain $\,K$, while that of $\,H_{n-1}(N)\,$ means any competing chain 
	$\,K'\,$ with $\,\bd K'=\bd K\,$ can be written $\,K'=K+\bd U\,$ for some $n$-chain $\,U$.  
	Since Killing fields are divergence-free, the Divergence Theorem then makes the second integral 
	in (\ref{eqn:kks}) independent of the choice of $\,K$. 
	
	Here, we extend this \cite{kks} flux theory in several ways. 
	
	First, in \S\ref{ssec:fv}, we broaden the mean curvature functional by allowing $\mu$-weighted
	area and volume as in Definition  \ref{def:mass}.
	This is a minor tweak of the standard theory, but it does \emph{not}  correspond
	to a mere conformal change of metric, since $n$- and $(n-1)$-dimensional volume scale
	differently under conformal change. We do this with a geometric 
	application in mind: the $\mu$-weighted theory relates the geometry of $\gs$-invariant 
	\cmc\ hypersurfaces in $\,N\,$ to that of hypersurfaces in the orbit space $\,N/\gs\,$ 
	(see Remark \ref{rem:mu} below).
	
Second, and more importantly, we eliminate the homological triviality assumptions mentioned above.  
Though we follow the same variational stragety as in KKS, we show the flux invariant lives more naturally in a certain \emph{relative} homology group.  Instead of focusing the invariant on $(n-2)$-cycles in the surface $\Sigma$, we realize the flux as an invariant on certain $(n-1)$-dimensional relative cycles
we shall call \emph{caps}.

The homological restriction can be naively avoided by defining the flux on $H_{n-1}(N, \Sigma)$.  When
$H_{n-2}(\Sigma)\ne 0$, however, one gets a more sensitive invariant by designating a set of ``reference cycles." We call this set a \emph{spine}.  It not only gives better invariants; it tends to make
flux calculations more tractable.

The new viewpoint reproduces the \cite{kks} invariant when $H_{n-1}(N) = H_{n-2}(N) = 0$.  
In that case, the reference cycle is trivial, and the long exact sequence for the pair $(N, \Sigma)$,
namely\smallskip
\[
	0=H_{n-1}\l(N\r)\lra H_{n-1}\l(N,\Sigma\r)\stackrel{\bd}{\lra} H_{n-2}\l(\Sigma\r)\lra H_{n-2}\l(N\r)= 0
\]
shows that $H_{n-1}(N, \Sigma) \cong H_{n-2}(\Sigma)$.
	
	We derive our generalized conservation law in \S\ref{sec:conslaw}, and then, 
	in \S \ref{sec:converse}, develop a partial converse.
	Before proceeding to these extensions however, we present a motivating example that
	we can review later as an illustration of our theory.
	
	\begin{eg}\label{eg:twiz1}
	\emph{Twizzlers} are ``helicoidal''\cmc\ surfaces invariant 
	under a 1-parameter group of screw motions in $\,\R^{3}$. Any such surface can be gotten
	by applying a screw motion to a curve $\,\g\,$ in a plane perpendicular to the screw-axis.
	The resulting helicoidal surface will then have mean curvature $\,h\equiv H\,$ if and only 
	if $\,\g\,$ satisfies an easily-derived second order ODE. As others 
	(\cite{dcd}, \cite{w}, \cite{p1}, \cite{h}) have noted, however, the second order ODE has a useful 
	first integral. We show how to derive it from flux conservation below.
	
	The conservation law formulated in \cite{kks}, however, yields nothing for twizzlers, since
	the typical \cmc\ twizzler is generated by a non-periodic curve $\,\g\,$ in the 
	transverse plane, and thus lacks homology. 
	To remedy that, one can mod out the translational period of the helicoidal motion, realizing
	the twizzler as an immersion of a \emph{cylinder} in 
	$\,N:=\R^{2}\times \US^{1}$.  
	Cylinders \emph{do} have non-trivial loops, but those loops don't bound in $\,N$, and hence
	can't be capped off as required by \cite{kks}. 
	
	Our approach evades that obstruction; see Example \ref{eg:twiz2} and 
	\S\ref{eg:twiz3}.
\eob
\end{eg}
	
\section{Conservation}\label{sec:conslaw}

	Like \cite{kks}, we derive flux-conservation using a constrained first-vari\-ation formula. 
	We make two notable modifications, however.
	
	First, we {weight} both the areas of hypersurfaces and the volumes
	of domains by an $\gs$-invariant \emph{density function}
	\begin{equation}\label{eqn:density}
		e^{\mu}: N\lra (0,\infty)
	\end{equation}
	Here $\,\mu\,$ can be any smooth function fixed by $\,\gs$.  The formula in 
	\cite{kks} effectively takes $\,\mu\equiv 0$, as will
	become clear in \S\ref{ssec:fv} below.

Secondly, we encode the homology of our immersion $f:\Sigma\to N$ into a set of reference cycles $B$.  Let $f_*$ denote the induced homomorphism
\[
f_* : H_{n-2}(\Sigma) \to H_{n-2}(N)
\]

\begin{definition}[Spine]\label{def:spine}
	We call a subgroup $B \subset \cy_{n-2}(N)$ 
	a \emph{spine} for the pair $(N, \Sigma)$ if: 
	\begin{itemize}
	 \item[a)] $B \cap \cy_{n-2}(\Sigma) = 0$
	 \item[b)] $B$ generates $f_* H_{n-2}(\Sigma)$
	 \item[c)] the composition $B \to \cy_{n-2}(N) \to H_{n-2}(N)$ is injective
	\end{itemize}
	
	We won't always draw an explicit distinction between the subgroup $B$ and a set of generating cycles 
	for $B$.
\eob\end{definition}

A non-trivial spine lets us assign fluxes to classes in 
$H_{n-2}(\Sigma)$ that don't bound in $\,N$.

Note that a spine for $\,(N,\Sigma)\,$ always exists. 
Indeed, any independent set of cycles that generate $\,f_{*}H_{n-2}(\Sigma)\,$ in 
$\,H_{n-2}(N)\,$ will satisfy conditions (b) and (c) of Definition \ref{def:spine}, and one can 
always perturb slightly, if needed, to realize (a). That condition is really an artifact of the language we use to define the flux invariants; we want the sum $\cy_{n-2}(\Sigma) + B$ to be direct. The assumption 
could be omitted in favor of more precision in distinguishing "caps with non-trivial spines" and "caps without spines."

\begin{definition}[Cap]\label{def:cap}
A \emph{cap} $K$ is any chain in $\ch_{n-1}(N)$ such that
\[
	\bd K \in \cy_{n-2}(\Sigma) \oplus B
\]
As the kernel of the composition
\[
	\ch_{n-1}(N) \stackrel{\bd}{\lra} \ch_{n-2}(N) \lra \ch_{n-2}(N) / \l(\ch_{n-2}(\Sigma) \oplus B\r)
\]
the set of all caps forms a group, which we denote by $\cy(N, \Sigma, B)$.

A \emph{reduced cap} is a class belonging to the quotient
\[
	\rc(N, \Sigma, B) = \cy(N, \Sigma, B) / \bo_{n-1}(N, \Sigma)
\]
\eob
\end{definition}

We call two caps $\,K$, $K'\,$ \textit{homologous}, written $\,K \sim K'$, if they represent the 
same reduced cap in $\rc(N, \Sigma, B)$.

In spirit, a reduced cap is a class in $H_{n-1}(N, \Sigma \cup \|B\|)$, where $\,\|B\|\,$ denotes
the support of $\,B$.  Indeed, when $\|B\|\,$ is disjoint from $\Sigma$, we have
$\,\rc(N, \Sigma, B)=H_{n-1}(N, \Sigma \cup \|B\|)$.   When $\|B\|$ does meet $\Sigma$, however, ambiguity 
can arise as to which part of $\partial K$ to take as $\,\b\,$ in Observation \ref{obs:cap} below. 
The need to remove that ambiguity motivated Definition \ref{def:cap}. The direct sum decomposition 
of $\,\cy(N,\Sigma,B)\,$ there immediately yields the fact we need:

\begin{obs}\label{obs:cap}
For any cap $K$, there exists a unique $\beta \in B$ with $\|\partial K - \beta\|\subset\Sigma$.
\end{obs}

	To make the notions of \emph{spine} and \emph{cap} more concrete, 
	we illustrate using twizzlers:
	
	\begin{eg}\label{eg:twiz2}
		As explained in Example \ref{eg:twiz1}, we may regard a twizzler as a 
		cylinder $\,\Sigma=\R\times\US^{1}\,$ immersed in $\,N=\C\times\US^{1}\,$ 
		and preserved by the helical $\,\US^{1}\,$ action
		\begin{equation}\label{eqn:screw}
			[e^{\i\th}]\left(z,\,e^{\i t}\right) = \left(e^{\i\th}z,\,e^{\i(t+\th)}\right)
		\end{equation}
		
		The length of the $\,\US^{1}\,$ factor is geometrically significant,  
		but can take it to be the usual $2\pi$ for purposes of this example.
		
		We call the orbits of the screw-action \emph{helices}. By construction, 
		both $\,N\,$ and the twizzler $\,f(\Sigma)\,$ are foliated by such helices, 
		any one of which generates $\,f_{*}H_{1}(\Sigma)=H_{1}(N)$.  It follows that any helix, 
		viewed as a $1$-cycle in $N$,
		qualifies as a spine for $\,(N,\Sigma)$.  We take the shortest one, namely 
		$\,{\mathbf{0}}\times\US^{1}\subset N$, as our spine $B$. 
		
		Suppose a twizzler is generated by a particular curve $\,\g:\R\to\C$, so that we can
		immerse it in $\,N\,$ via
		\[
			f\left(t,e^{\i\th}\right)= \left(e^{\i\th}\g(t),\,e^{\i\th}\right)
		\]
		For each fixed $\,t\in\R$, the helix $\,\G_{t}:=f(t,\US^{1})\,$ forms a non-trivial
		cycle in $\,H_{1}(\Sigma)$.  Any oriented surface that realizes the homology between
		$\,\G_{t}\,$ and the compatibly oriented cycle $\,\b\in B\,$ is then a \emph{cap} for 
		$\,\G_{t}$. 
		
		For instance, the line segment (or any arc) joining $\,\mathbf{0}\,$ to $\,\g(t)\,$ in 
		$\,\C\,$ will, under the $\,\US^{1}\,$ action \ref{eqn:screw}, sweep out a cap, and all
		arcs give rise to the same reduced cap in this way. Such caps are also preserved by the 
		$\,\US^{1}\,$ action, a useful property that many other caps lack.
	\eob
	\end{eg}

\subsection{First variation.}\label{ssec:fv}
To prepare for our first variation formula, fix a spine $B$ for $(N, \Sigma)$, and suppose we have 
homologous caps $K$, $K'$ in $\cy(N, \Sigma, B)$.  There then exists an $n$-chain $U$ satisfying
\begin{equation}\label{eqn:UKKS}
	\partial U = S+ K - K'
\end{equation}
for some $\,S\in\ch_{n-1}(\Sigma)$.  
Applying the boundary operator to \eqref{eqn:UKKS}, we 
then get
\begin{equation}\label{eqn:bdUKKS}
	\partial K - \partial K'= -\partial S .
\end{equation}
In particular, $\partial K - \partial K'$ is a cycle in $\Sigma$,
and by definition of \textit{cap}, there now exist unique $\beta, \beta' \in B$ such that
\[
	\partial K - \beta, \,\, \partial K' - \beta' \in \cy_{n-2}(\Sigma)
\]
and \eqref{eqn:bdUKKS} forces $\beta = \beta'$. This proves

\begin{prop}\label{prop:bdcap}
	If two caps $K, K'\in\cy(N,\Sigma,B)$ are homologous, there exists a unique $\beta \in B$ so that both $
	\partial K - \beta$ and $\partial K' - \beta$ are supported in $\Sigma$.
\end{prop}

\begin{definition}\label{def:assocspine}
	The Proposition above lets us define the
	\textbf{spine of a reduced cap $\c \in \rc(N, \Sigma, B)$} as the unique 
	$\beta \in B$ with $\bd K - \beta \in \cy_{n-2}(\Sigma)$ for any representative $K$.
\eob
\end{definition}

	In the situation just described, and in the presence of a density
	function $\,e^{\mu}$, we now consider the $n$- and $(n-1)$-dimensional $\mu$-weighted volumes
	$\,\mv{U}\,$ and $\,\mv{S}\,$ of the chains $\,U\,$ and $\,S\,$ respectively 
	(Definition \ref{def:mass}) as we deform along the flow of a smooth vectorfield $Y$.  
	Fix a scalar $H$, and consider the initial derivative of $\,\mv{S}-H\,\mv{U}\,$ with respect to
	this flow, written 
	\begin{equation}\label{eqn:vcfvf}
		\delta_{Y}\left(\mv{S}-H\mv{U}\right)
	\end{equation}
	Calling this the \textbf{($\mu$-weighted) volume-constrained first-varia\-tion} of $S$,
	we obtain our conservation law for hypersurfaces with constant \textit{$\mu$-mean curvature} 
	$\,h_{\mu}\equiv H\,$ as defined in (\ref{eqn:hmu}) below, by evaluating (\ref{eqn:vcfvf}) on 
	Killing vectorfields of $\,N$. To simplify the task, we analyze $\,\delta_{Y}\mv{U}\,$ and 
	$\,\delta_{Y}\mv{S}\,$ separately before combining results.
	
	A familiar derivation shows $\,\delta_{Y}\mv{U}\,$ to equal the integral of $\,\Div Y$ 
	over $\,U$ when $\,\mu\equiv 0$.  A routine modification of that calculation 
	shows that for general $\,\mu$, we get 
	\begin{equation*}
		\delta_{Y}\mv{U}
		=
		\int_{U}\Div{e^{\mu}\,Y}
		=
		\int_{\bd U} e^{\mu}\,Y\cdot \nu
	\end{equation*}
	where $\,\nu\,$ denotes the orienting unit normal along $\,\bd U$. 
	By \eqref{eqn:UKKS} we can rewrite this as
	\begin{equation}\label{eqn:fvu}
		\delta_{Y}\mv{U}
		=
		 \int_{S} e^{\mu}\,{Y} \cdot \nu + \int_{K-K'} e^{\mu}\,{Y} \cdot \nu 
	\end{equation}
	
	A similar modification of the $\mu\equiv 0\,$ case as analyzed in \cite[pp. 46--51]{lms},
	computes the $\mu$-weighted first-variation of $\mv{S}$ along ${Y}$:
	\begin{equation}\label{eqn:fvs1}
		\delta_Y\mv{S} 
		= 
		\int_S e^{\mu}\,d\mu(\nu)\nu\cdot Y 
			+ \div{e^{\mu}\,Y^{\top}}
			+ \div{e^{\mu}\,Y^{\bot}}
	\end{equation}
	Here $\,Y^{\top}\,$ and $\,Y^{\bot}\,$ signify the tangential and normal components, 
	respectively, of $\,Y\,$ along $\,S$.  
	
	Recall that for vectorfields \emph{tangent} to $\,\Sigma$, the Divergence Theorem applies in 
	its usual form: Given an $(n-1)$-chain $\,S\,$ in $\,\Sigma\,$ with oriented unit 
	conormal $\,\eta\,$ along its boundary, we have
	\[
		\int_{S}\div{X}\ = \int_{\bd S}X\cdot\eta
		\qquad\text{($X$ tangent to $\Sigma$)}
	\]	
	
	For vectorfields \emph{normal} to $\,\Sigma$, on the other hand, the divergence operator
	invokes the mean curvature of $\,\Sigma$, due to \eqref{eqn:h}. 
	When $\,Z = (Z\cdot\nu)\nu\,$ is purely normal, then, the Leibniz rule yields\medskip	
	\[
		\int_{S}\div{Z}\ = \int_{S}(Z\cdot\nu)\,h
		\qquad\text{($Z$ normal to $\Sigma$)}
	\]
	
	Accordingly, we define the \textbf{$\mu$-mean curvature $\,h_{\mu}\,$} along $\,\Sigma\,$ 
	as\smallskip
	\begin{equation}\label{eqn:hmu}
		h_{\mu}:= h+ d\mu(\nu)\ .
	\end{equation}
	
	Using this notation, the facts above reduce (\ref{eqn:fvs1}) to\medskip	
	\begin{equation}\label{eqn:fvs}
		\delta_Y\mv{S}
		=
		\int_{\bd S} e^{\mu}\,{Y} \cdot \eta\ \  + \ \int_S e^{\mu}\,h_{\mu}\,Y\cdot\nu 
	\end{equation}
	
	Finally, using \eqref{eqn:fvu}, \eqref{eqn:fvs}, and \eqref{eqn:bdUKKS}, we can put our 
	{volume-constrained first-variation formula} (\ref{eqn:vcfvf}) into the form we need:
	
	\begin{equation}\label{eqn:vcfvf2}
		\begin{array}{rcl}
			\lefteqn{\delta_Y(\mv{S} - H\mv{U})}\\
			&&\\
			&=&
			\ds-\int_{\partial K - \partial K'}e^{\mu}\,\eta\cdot Y\ -H\int_{K-K'}e^{\mu}\,\nu\cdot Y \\
			&&\\ 
			&&\qquad 
			+ \ds\int_{S}e^{\mu}\,\left(h_{\mu}-H\right)\,\nu\cdot Y
		\end{array}
	\end{equation}\smallskip

	\begin{rem}\label{rem:mu}
		The $\mu$-mean curvature $\,h_{\mu}\,$ arises naturally in the context of 
		riemannian submersions, which we encounter here whenever a compact
		Lie group $\,\gg\,$ of dimension $\,k>0\,$ acts isometrically on a riemannian manifold
		$\,X$.  In that situation, the principal orbits (roughly speaking, the orbits of highest
		dimension) foliate a dense open subset $\,X'\subset X$, and the submersion 
		$\,X'\to X'/\gg\,$ becomes riemannian, given the right metric on 
		$\,X'/\gg\,$ (cf. \cite{hl}).
		
		In any case, every riemannian submersion $\,\pi:P\to N\,$ induces a 
		\textbf{fiber volume function}\smallskip
		\[
			e^{\mu}:N\to(0,\infty),\qquad e^{\mu}(p):= \left|\pi^{-1}(p)\right|
		\]
		
		where $\,|\pi^{-1}(p)|\,$ is the $k$-dimensional volume of the fiber over 
		$\,p$.  A standard first-variation calculation then shows:
		
		\begin{obs}
			The $\mu$-mean curvature $h_{\mu}$ of a hypersurface $\,\Sigma\subset N\,$
			gives the classical mean curvature $h$ of its preimage $\,\pi^{-1}(\Sigma)\subset P$. 
		\end{obs}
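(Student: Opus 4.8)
Write $\widetilde{\Sigma}:=\pi^{-1}(\Sigma)$. The plan is to establish the identity $\widetilde h=h_\mu$ by a first-variation argument that plays the classical area functional on $\widetilde\Sigma\subset P$ against the $\mu$-weighted functional $\mv{\cdot}$ on $\Sigma\subset N$. Since $\Sigma$ has codimension one in $N$ and $\pi$ is a submersion, $\widetilde\Sigma$ is a $G$-invariant hypersurface in $P$ (of dimension $(n-1)+k$); its classical mean curvature $\widetilde h=\mathrm{div}_{\widetilde\Sigma}\widetilde\nu$ is constant along the $G$-orbits and therefore descends to a function on $\Sigma$, which is the object we must identify.

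First I would record the two geometric facts that tie $\widetilde\Sigma$ to $\Sigma$. Because $\pi$ is a riemannian submersion and $\widetilde\Sigma=\pi^{-1}(\Sigma)$ contains every fiber it meets, the tangent space $T\widetilde\Sigma$ splits orthogonally into the vertical (fiber-tangent) space and the horizontal lift of $T\Sigma$; hence the unit normal $\widetilde\nu$ of $\widetilde\Sigma$ is exactly the horizontal lift of the unit normal $\nu$ of $\Sigma$. Factoring the volume element of $\widetilde\Sigma$ into its fiber and horizontal parts (the horizontal Jacobian of $\pi$ being $1$) then yields the fiber-integration identity
\[
	\mathrm{Vol}_P\!\left(\pi^{-1}(\Omega)\right)=\int_\Omega e^{\mu}\,dA=\mv{\Omega}
	\qquad\text{for every }\Omega\subset\Sigma,
\]
since the $k$-volume of the fiber over $p$ is $e^{\mu(p)}$ by the definition of the orbit volume function.

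Next I would run the variational comparison. Choose any compactly supported normal variation of $\Sigma$ in $N$ with normal speed $\psi$, let $\Sigma_t$ be the deformed hypersurfaces, and set $\widetilde\Sigma_t:=\pi^{-1}(\Sigma_t)$. Because $\widetilde\nu$ is the horizontal lift of $\nu$ and $d\pi$ is an isometry on horizontal vectors, the normal speed of $\widetilde\Sigma_t$ is the pullback $\psi\circ\pi$. Computing $\frac{d}{dt}\big|_{0}\mathrm{Vol}_P(\widetilde\Sigma_t)$ two ways now gives the result. On one hand the classical first variation of area equals $\int_{\widetilde\Sigma}\widetilde h\,(\psi\circ\pi)$, which by the fiber-integration identity (both $\widetilde h$ and $\psi\circ\pi$ being fiber-constant) reduces to $\int_\Sigma \widetilde h\,\psi\,e^{\mu}\,dA$. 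On the other hand $\mathrm{Vol}_P(\widetilde\Sigma_t)=\mv{\Sigma_t}$ for every $t$, so the same derivative equals $\delta\mv{\Sigma}$, which formula (\ref{eqn:fvs}) evaluates (the boundary term dropping for the compactly supported $\psi\nu$) as $\int_\Sigma e^{\mu}\,h_\mu\,\psi\,dA$. Equating the two expressions for all $\psi$ forces $\widetilde h=h_\mu$ pointwise on $\Sigma$.

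The main obstacle is the second geometric fact: verifying that $\widetilde\nu$ is genuinely the horizontal lift of $\nu$ and that the orbit $k$-volume supplies precisely the weight $e^{\mu}$ in the factorization of the volume element, so that both the static identity $\mathrm{Vol}_P(\pi^{-1}(\Omega))=\mv{\Omega}$ and the matching of normal speeds hold. Once the riemannian-submersion bookkeeping is in place, everything else is the paper's first-variation formula together with the arbitrariness of $\psi$. One could instead argue pointwise with an adapted frame, splitting $\widetilde h=\mathrm{div}_{\widetilde\Sigma}\widetilde\nu$ into a horizontal sum that recovers $h$ and a vertical sum $\sum_\alpha\widetilde\nabla_{v_\alpha}\widetilde\nu\cdot v_\alpha=-\vec H_{\mathrm{fiber}}\cdot\widetilde\nu$ equal to $d\mu(\nu)$, the latter being the logarithmic derivative of the fiber volume in the $\nu$-direction; this route trades the variational machinery for the standard fact that the fibers' mean curvature vector is $-\nabla\mu$.
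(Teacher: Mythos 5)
Your argument is correct, and it is precisely the ``routine first-variation calculation'' that the paper alludes to without writing out: the fiber-integration identity $\mathrm{Vol}_P(\pi^{-1}(\Omega))=\mv{\Omega}$ together with the comparison of the two first variations is exactly the intended route, and your identification of $\widetilde\nu$ as the horizontal lift of $\nu$ and of the normal speeds is the right bookkeeping. The alternative pointwise computation you sketch (horizontal trace giving $h$, vertical trace giving $-\widetilde\nu\cdot\vec H_{\mathrm{fiber}}=d\mu(\nu)$) is also sound and equally acceptable.
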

		
		In the context of an isometric $\gg$-action as discussed above, one may then study 
		$G$-invariant hypersurfaces of constant (classical) mean curvature 
		$\,h\equiv H\,$ in $\,X\,$ by considering, instead, hypersurfaces of constant $\mu$-mean
		curvature $\,h_{\mu}\equiv H\,$ in the orbit space $\,X/\gg$.  This can be especially fruitful 
		when $\,X/\gg\,$ is just two- or three-dimensional. We consider examples 
		involving twizzlers at the end of the paper.
	\eob
	\end{rem}
	
	In any case, the constrained first-variation formula \eqref{eqn:vcfvf2} 
	lets us extend the conservation law presented in \cite{kks}. 
	As before, $\,\gs\subset\gn\,$ denotes a $\,\mu$-preserving
	group of isometries on $\,N$, and the Killing fields that generate 
	its identity component correspond to $\,\lgs$.
	
	\begin{thm}[Conservation law]\label{thm:law} 
		Suppose $\Sigma\subset N$ is an oriented hypersurface with $h_{\mu}\equiv H$, and 
		$B$ is a spine for the pair $(N,\Sigma)$.  Then the formula
		\begin{equation}\label{eqn:flux}
			\ff_{B}[\c](Y) := \int_{\partial K - \beta} e^{\mu}\,\eta\cdot Y + H \int_K e^{\mu}\,\nu \cdot Y
		\end{equation} 
		yields a well-defined homomorphism
		\[
			\ff_{B}:\rc(N, \Sigma, B)\otimes\lgs\to \R\ .
		\]
		Here $\,Y\,$ is any Killing field in $\,\lgs$,  $\,K\,$ is any cap in $\,\c$, 
		and $\beta\in B$ is the spine of $\,\c\,$ given by Definition \ref{def:assocspine}.
	\end{thm}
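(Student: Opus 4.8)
The plan is to prove three things in turn, since ``well-defined homomorphism'' packages exactly three claims: for a fixed class the right-hand side of (\ref{eqn:flux}) is independent of the chosen cap; the resulting assignment $Y\mapsto\ff_B[\c](Y)$ is linear; and the assignment $\c\mapsto\ff_B[\c]$ respects addition. Linearity in $Y$ is immediate, because the integrands $e^{\mu}\,\eta\cdot Y$ and $e^{\mu}\,\nu\cdot Y$ are pointwise linear in $Y$, so for each $\c$ the formula defines an element of $\lgs^{*}$. The substance lies in cap-independence, and this is where both the hypothesis $h_\mu\equiv H$ and the restriction $Y\in\lgs$ are used.

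To prove cap-independence, let $K,K'$ be two caps representing the same class $\c$, with associated cycles $\G,\G'\in\cy_{n-2}(\Sigma)$ and (via Lemma \ref{lem:spine}) the unique $B$-cycles $\b,\b'$ satisfying $\bd K=\b-\G$ and $\bd K'=\b'-\G'$. Since $K\sim K'$ in $\hnsb$, Observation \ref{obs:C} supplies an $(n-1)$-chain $S$ in $\Sigma$ and an $n$-chain $U$ in $N$ with $S+K-K'=\bd U$. Applying $\bd$ and using $\bd\bd U=0$ yields the chain identity
\[
	\bd S=(\G-\G')+(\b'-\b).
\]
Here $\b'-\b$ is a cycle lying both in $B$ and, as a summand of $\bd S$, in $\Sigma$; I would argue that it therefore vanishes, so that $\bd S=\G-\G'$ exactly. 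This is the delicate homological bookkeeping of the proof: the cancellation of the spine contribution rests on the spine axioms---injectivity of $i_{*}$ in Definition \ref{def:spine}(b) together with the fact that $B$ has dimension $n-2$, so that (after general position) $\Sigma\cap B$ supports no nonzero $(n-2)$-chain built from immersed simplices, mirroring the dimension argument used in Lemma \ref{lem:spine} and Observation \ref{obs:C}.

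With $\bd S=\G-\G'$ in hand, $S$ and $U$ are precisely the configuration feeding the volume-constrained first-variation formula (\ref{eqn:vcfvf2}). Its bulk term $\int_S e^{\mu}(h_\mu-H)\,\nu\cdot Y$ vanishes identically because $S$ lies in $\Sigma$, where $h_\mu\equiv H$, so (\ref{eqn:vcfvf2}) collapses to
\[
	\delta_Y\bigl(\mv{S}-H\mv{U}\bigr)=\int_{\G-\G'} e^{\mu}\,\eta\cdot Y-H\int_{K-K'}e^{\mu}\,\nu\cdot Y,
\]
whose right-hand side is exactly the difference between the flux computed from $K$ and that computed from $K'$. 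It remains to see the left-hand side is zero. Since $\lgs$ generates the identity component of the $\mu$-preserving isometry group $\gs$, each $\exp(tY)$ is an isometry of $N$ fixing $\mu$, hence preserves both the Riemannian volume form and the weight $e^{\mu}$, and therefore preserves the $\mu$-weighted volume of every chain. Thus $t\mapsto\mv{\exp(tY)\,S}$ and $t\mapsto\mv{\exp(tY)\,U}$ are constant, so $\delta_Y\mv{S}=\delta_Y\mv{U}=0$ and the variation vanishes. The two flux values agree, establishing cap-independence. (This also explains why the codomain is $\lgs^{*}$ and not the larger $\lgn^{*}$: a Killing field not fixing $\mu$ need not preserve weighted volume, and the argument would break.)

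Finally, additivity in the class is formal once the above is in place: given classes $\c_1,\c_2$ with caps $K_1,K_2$ and cycles $\G_1,\G_2$, the chain $K_1+K_2$ is a cap for $\c_1+\c_2$ with associated cycle $\G_1+\G_2$, and additivity of $\bd$ and of integration over chains gives $\ff_B[\c_1+\c_2]=\ff_B[\c_1]+\ff_B[\c_2]$. I expect the main obstacle to be not the analysis but the homological step flagged above---verifying that the spine contribution $\b'-\b$ cancels so that $\bd S=\G-\G'$ matches the boundary terms of (\ref{eqn:vcfvf2}) with correct signs. The analytic engine (the first-variation formula and the isometry-invariance of weighted volume) is already available; it is this chain-level matching that demands care.
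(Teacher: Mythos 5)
Your proposal is correct and follows essentially the same route as the paper: Observation \ref{obs:C} produces the chains $S$ and $U$, the volume-constrained first-variation formula (\ref{eqn:vcfvf2}) has vanishing left-hand side because $Y\in\lgs$ is Killing and preserves $\mu$, and the bulk term dies because $h_{\mu}\equiv H$, leaving exactly the cap-independence identity (\ref{eqn:wd}). The one step you flag as delicate --- the cancellation of the spine contribution $\b'-\b$ --- is already built into the paper's framework, since Definition \ref{def:cap} and the uniqueness clause of Lemma \ref{lem:spine} force both caps to share the same $B$-cycle $\b$, so $\bd S=\G-\G'$ comes for free.
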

	
	\begin{proof}[Proof of Theorem \ref{thm:law}] 
		The basic linearity properties of the integral make $\,\ff_{B}\,$ a homomorphism
		once we establish well-definition: that $\,\ff_{B}[\c](Y)\,$ doesn't
		depend on which cap $\,K\in\c\,$ we use to compute it. We thus need to show
		for all $\,Y\in \lgs$, and all $\,K,K'\in\c$, that
		\begin{equation}\label{eqn:wd}
			\int_{\partial K-\beta} e^{\mu}\,\eta\cdot Y + H \int_K e^{\mu}\,\nu \cdot Y
			=
			\int_{\partial K'-\beta}e^{\mu}\,\eta\cdot Y + H \int_{K'} e^{\mu}\,\nu \cdot Y
		\end{equation}
		for any other $\,K'\in\c$. This follows easily from the constrained first-variation 
		formula (\ref{eqn:vcfvf2}), however. 
		
		For $\,\mu\,$ is $\gs$-invariant, and $\,Y\,$ generates a flow
		that leaves both $\mv{S}$ and $\mv{U}$ unchanged, and hence the left-hand side of 
		(\ref{eqn:vcfvf2}) must vanish.
		The integral over $\,S\,$ on the right of (\ref{eqn:vcfvf2}) vanishes too,
		because $\,h_{\mu}\equiv H$.  So (\ref{eqn:vcfvf2}) reduces to
		\[
			0
			=
			\ds{\ 
			\int_{\partial K - \partial K'} e^{\mu}\,{\eta} \cdot Y 
				+ H\int_{K-K'} e^{\mu}\,{\nu} \cdot Y
			}
		\]
		
		This is clearly equivalent to (\ref{eqn:wd}), since the integrals over 
		$\beta$ there cancel.
	\end{proof}

	\begin{rem}
		The simplest case of Theorem \ref{thm:law}, 
		where $\,\mu\equiv 0\,$ and $\,\gs$ is the full isometry group of $\,N\,$ (so that $\,\lgs\,$ 
		includes all Killing fields) already improves on the conservation law in \cite{kks} by 
		eliminating the triviality assumptions there on $\,H_{n-1}(N)\,$ and $\,H_{n-2}(N)$. 
	\eob
	\end{rem}
			
	\begin{rem}
		The particular choice of spine $\,B\,$ in Theorem \ref{thm:law} is of no real consequence. 
		For when $\,B\,$ and $\,B'\,$ are both spines for $\,(N,\Sigma)$, the well-definition of 
		$\,\ff_{B}\,$ on a class in $\rc(N, \Sigma, B)$ implies that of $\,\ff_{B'}\,$
		on a corresponding class in $\rc(N,\,\Sigma, B')$.
		
		To see this, suppose $\,\phi_{B}\,$ is well-defined on a class $\c$ containing a cap
		$\,K\,$ with boundary $\Gamma + \beta$, where $\beta \in B$ and $\Gamma$ is supported in $\Sigma$. 
		Then there exists a cycle $\beta' \in B'$ homologous to $\beta$, and hence
		an $(n-1)$ chain $\,P\,$ with 
		\[
			\bd P = \b'-\b\ .
		\]
		We claim $\,\phi_{B'}\,$ will now be well-defined on the class $\c'$ represented by $\,K+P\,$
		in $\rc(N, \Sigma, B')$. 
		
		Indeed, take any cap $\,\tilde K\,$ homologous to $\,K+P\,$ in the latter group. 
		Then $\,\tilde K-P\in\c\in \rc(N, \Sigma, B)$, and if $\,\phi_{B}\,$ is well-defined there for some 
		$\,Y\in\lgn$, we have, on the one hand,
		
		\[
			\phi_{B}(\tilde K-P, Y)=\phi_{B}(K,Y)
		\]
		
		On the other hand, we have
		\begin{eqnarray*}
			\phi_{B}(\tilde K-P,Y)
			&=&
			\int_{\G'}e^{\mu}\,\eta\cdot Y + H\int_{\tilde K-P}e^{\mu}\,\nu\cdot Y\\
			&=&
			\int_{\G'}e^{\mu}\,\eta\cdot Y + H\int_{\tilde K}e^{\mu}\,\nu\cdot Y 
				+ H\int_{P}e^{\mu}\,\nu\cdot Y\\
			&=&
			\phi_{B'}(\tilde K,Y) + H\int_{P}e^{\mu}\,\nu\cdot Y
		\end{eqnarray*}
		Together, these facts yield
		\[
			\phi_{B'}(\tilde K,Y) = \phi_{B}(K,Y)-H\int_{P}e^{\mu}\,\nu\cdot Y
		\]
		Since $\,\tilde K\,$ was arbitrary in $\,\c'$, while $\,P\,$ is fixed, we see that
		$\,\phi_{B'}\,$ is well-defined on $\,\c'\in\rc(N, \Sigma, B')$, as claimed.
	\eob
	\end{rem}	
	\bigskip

\section{Partial converse}\label{sec:converse}
	
	Suppose the isometry group $\,\gn\,$ of our ambient manifold $\,N\,$ contains
	a closed, connected group $\,\gs\,$ preserving a density function $\,e^{\mu}\,$ as above.
	Consider an immersed hypersurface $\,f:\Sigma\to N$, together with a spine 
	$B$ for the pair $\,(N,\Sigma)$.  
	
	Above, we assumed constancy of $\mu$-mean curvature on $\,\Sigma$, and deduced conservation of
	of flux. We now seek a \emph{converse} conservation law to the effect that well-definition of
	the flux functional $\,\ff_{B}\,$ implies constancy of $\mu$-mean curvature.
	Well-definition of $\ff_{B}$, however, means nothing without Killing fields on which to pose it, 
	so the strength of any such converse must correlate with the abundance of Killing fields.
	
	Similarly, one shouldn't need to assume well-definition of $\,\ff_{B}\,$ on \emph{all} Killing 
	fields to get a conservation law.
	We could restrict $\,\ff_{B}\,$ to a non-empty subset of $\,\lgs\,$ (even a singleton) and
	ask whether well-definition of $\ff_{B}$ there influences geometry.
	
	Dually, we needn't assume constancy of $\,\ff_{B}\,$ on all caps.
	We have in mind the case where $\,\Sigma\,$ is preserved by a closed, connected subgroup 
	$\,\gg\subset\gs\,$ and $\,\ff_{B}\,$ takes a fixed value on a sufficiently ``crowded'' set of 
	homologous $\,\gg\,$-invariant caps. 
	
	\begin{definition}[\ggc]\label{def:crowded}
		A set of trivial caps $\cs \subset \mathcal B_{n-1}(N, \Sigma)$ is a \textbf{$\gg$-crowded set 
		of boundaries} if, for every $\gg$-orbit $\lambda$ and every $\epsilon > 0$, we can find a cap 
		$K \in C$ satisfying
		\[
			K = \partial U - S
		\]
		Here $U$ is an $(n+1)$-chain in $N$, and $S$ is an $n$-chain in $\Sigma$, which, as an
		$n$-current, is represented by some constant multiple of a submanifold-with-boundary within 
		distance $\epsilon$ of the orbit $\lambda$. In other words, for some constant $\,c\,$
		and any integrable function $f$, we have\smallskip
		\[
			\int_S f = c \int_{\text{spt}\,S} f
		\]
		
		We say that a set $\cs$ of \emph{non}-bounding caps in $\mathcal Z(N, \Sigma, B)$ 
		is \textbf{\ggc} if the difference set $\,\{K-K'\colon\ K,K'\in\cs\}\,$ forms
		a \ggc\ set of boundaries.  Note that in this case, each $\,K\in\cs\,$ represents
		the same reduced cap in $\rc(N, \Sigma, B)$.
	\eob
	\end{definition}
	 
	\begin{eg}
		For any point $p \in \Sigma$, and $\epsilon > 0$, let $V_\epsilon(p)$ be the $\gg$-orbit of 
		the ball $B_\epsilon(p)$. When $\epsilon$ is sufficiently small, $\Sigma$ will separate 
		$V_\epsilon$ into two open sets $V_\epsilon^{\pm}$.  Then $K = \partial V_\epsilon^+ - \Sigma 
		\cap V_\epsilon$ will be a trivial cap, and the collection of all these $K$ for $p \in \Sigma$ 
		and $\epsilon > 0$ small will form a $\gg$-crowded set.
	\end{eg}

	Using this definition, we can state and prove our partial converse, which says (roughly) 
	that when our hypersurface $\,\Sigma\,$ and the density $\,e^{\mu}$ are preserved by a 
	closed, connected subgroup $\,\gg\subset\gn$,
	and the flux is constant on a \ggc\ set of caps --- with respect to Killing 
	fields that commute with $\,\gg\,$ --- we can split $\,\Sigma\,$ into two nice subsets:  
	One with constant $\mu$-mean curvature, and one preserved by the flows of those Killing 
	fields. These subsets may overlap, and either can be empty as seen in Examples \ref{eg1} 
	below.
	
	\begin{thm}\label{thm:converse}
		Let $\,\Sigma\subset N\,$ be a complete oriented $\gg$-invariant hypersurface, and $\,B\,$
		a spine for the pair $\,(N,\Sigma)$. 
		Suppose $\,\cs\subset \mathcal Z(N, \Sigma, B)\,$ is a \ggc\ set of caps, and $\beta\in B$ 
		is the spine of the reduced cap containing $\,\cs$.

		If $\,\gg\,$ preserves a Killing field $\,Y$, and the $\mu$-weighted flux functional 
		\[
			\int_{\partial K - \beta}e^{\mu}\,\eta\cdot Y + H\int_{K}e^{\mu}\,\nu\cdot Y
		\]
		
		is constant on $\,\cs$, then the set
		\[
			\Sigma':=\Sigma\setminus h_{\mu}^{-1}(H)
		\]
		is preserved by the flow of $\,Y$.
	\end{thm}

	\begin{proof}
		Definition \ref{def:crowded} and the form of the flux functional immediately show that
		constancy of flux on any \ggc\ set of {caps} in $\,\mathcal Z(N, \Sigma, B)\,$
		forces \emph{vanishing} of flux on a \ggc\ set of \emph{boundaries}. 
		So without losing generality, we may assume $\cs\subset\bo_{n-1}(N,\Sigma)$.
		
		The heart of our argument then lies with the following
		
		\textbf{Claim:} 
		\emph{If $\,p\in\Sigma'$, then $\,Y_{p}\in T_{p}\Sigma$. }
		
		The definition makes $\Sigma'$ relatively open in $\Sigma$.  Since $\gg$ preserves $\Sigma$ and 
		$\mu$, it preserves $h_\mu$ and hence $\Sigma'$.  The $\gg$-crowdedness of $\cs$ ensures the 
		existence of a cap
		\[
			K = \partial U - S \in \cs,
		\]
		with $S \subset \Sigma'$ supported within an arbitrarily small distance to 
		the $\gg$-orbit of $p$. 
				
		We use the volume-constrained first-variation formula with $K$ as above, and $K' = 0$ since 
		$K$ bounds modulo $\Sigma' \subset \Sigma$.  The first two integrals in (\ref{eqn:vcfvf2}) 
		now vanish on our Killing field $\,Y$, since together they compute the flux of $\,Y\,$ across 
		a trivial cap. 
		
		This reduces the constrained first-variation to a single integral:\smallskip
		\begin{equation*}
			 \delta_{Y}\left(\mv{S}-H\mv{U}\right) 
			 = 
			 \int_{S} e^{\mu}\left(h_{\mu}-H\right)Y\cdot \nu
		\end{equation*} 
		
		Finally, since $\,Y\,$ preserves $\mu$, the left side of this equation must 
		\emph{vanish}, leaving the identity\smallskip
		\begin{equation}\label{eqn:h-H}
			\int_{\text{spt}\,S} e^{\mu}\left(h_{\mu}-H\right)Y\cdot \nu = 0\ .
		\end{equation}

		If $Y_p \not\in T_pS$, then by assumption the integrand $(h_\mu - H) Y\cdot \nu \neq 0$ at $p$.  
		Since all quantities are continuous and preserved by $\gg$, it follows that 
		$(h_\mu - H) Y\cdot \nu$ is strictly positive (or negative) in a neighborhood of the 
		$\gg$-orbit of $p$.  Since the $\gg$-crowdedness of $\cs$ lets us confine the support of 
		$\,S\,$ to such a neighborhood, we can contradict equation \eqref{eqn:h-H}, thereby 
		proving the claim.
		
		To finish proving the Theorem, it suffices to show that whenever $\,p\in\Sigma'$,
		the entire $Y$-streamline with initial velocity $\,Y_{p}\,$ lies in $\,\Sigma'$.
		
		Let $\,T>0\,$ be the maximal time such that $\,\Th(p,t)\subset\Sigma'\,$ for \textit{all}
		$\,t<T$. By Lemma \ref{lem:nbd} (with $\,q:=p$), some such $\,T\,$ exists.
		Since $\,Y\,$ generates a $\mu$-preserving \textit{isometric} flow, we have 
		$\,h_{\mu}(\Th(p,t))\equiv H'\,$ with $\,H'\,$ constant for all $\,t\in[0,T)$. Moreover,
		$\,H'\ne H$, as we are in $\,\Sigma'$. 
		We now claim $\,T=\infty$. For otherwise, the continuity of $\,h_{\mu}\,$ and the completeness
		of the larger hypersurface $\,\Sigma\,$ immediately yields both $\,\Th(p,T)\in\Sigma$, and 
		$\,h_{\mu}(\Th(p, T))=H'\ne H$, so that $\,\Th(p,T)\in\Sigma'$. But then Lemma \ref{lem:nbd} 
		(with $\,q:=\Th(p,T)$) contradicts the maximality of $\,T\,$. In short, $\,\Th(p,t)\in\Sigma'\,$
		for \textit{all} $\,t\ge 0\,$. Since the same reasoning shows that $\,\Th(p,t)\in\Sigma'\,$
		for all $\,t\le 0\,$ too, the proof is complete.
	\end{proof}
		
	\begin{rem}\label{rem:triv} 
		We emphasize again that our converse remains interesting even when $\,\gg\,$
		is trivial. Theorem \ref{thm:converse} then implies, for instance, that when the 
		flux across every sufficiently small trivial 
		cap vanishes on the generators of a subgroup $\,\gs\subset\gn$, the 
		part of $\,\Sigma\,$ that does \emph{not} have constant $\mu$-mean curvature 
		$\,h_{\mu}= H\,$ must be $\gs$-invariant. 
	\eob
	\end{rem}
	
	\begin{cor}
		If, as in Theorem \ref{thm:converse}, the $\mu$-weighted flux functional is constant on one 
		\ggc\ set of caps, it actually extends as a well-defined conserved quantity to all of 
		$\rc(N, \Sigma, B)$.
	\end{cor}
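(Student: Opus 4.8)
The plan is to show that the flux functional $\ff_B$ takes equal values on any two homologous caps, since that agreement is exactly what well-definition on $\hnsb$ requires. Suppose $K,K'\in\cy_{n-1}(N,\Sigma\cup B)$ are homologous caps, with boundary cycles $\G,\G'\in\cy_{n-2}(\Sigma)$. By Observation \ref{obs:C} I obtain an $(n-1)$-chain $S$ in $\Sigma$ and an $n$-chain $U$ in $N$ with $\bd U = S+K-K'$ and $\bd S=\G-\G'$. For a given $Y\in\lgs$ I feed these chains into the volume-constrained first-variation formula (\ref{eqn:vcfvf2}). Because $Y$ generates a flow of $\mu$-preserving isometries, the left side $\delta_Y(\mv{S}-H\mv{U})$ vanishes, just as in the proof of Theorem \ref{thm:law}; the one difference is that here I do \emph{not} assume $h_\mu\equiv H$, so I must retain the final term. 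This yields
\begin{align*}
	\ff_B(K)(Y) - \ff_B(K')(Y)
	&= \int_{\G-\G'}e^{\mu}\,\eta\cdot Y - H\int_{K-K'}e^{\mu}\,\nu\cdot Y \\
	&= -\int_{S} e^{\mu}\,(h_\mu - H)\,\nu\cdot Y\,.
\end{align*}

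The crux is then to argue that this last integral vanishes for \emph{every} such $S$, not merely for the differences of crowded caps we were originally handed. I would do this by showing the integrand $(h_\mu-H)\,\nu\cdot Y$ is identically zero on $\Sigma$, splitting according to the set $\Sigma_{\gs}=\Sigma\setminus h_\mu^{-1}(H)$. Off $\Sigma_{\gs}$ the factor $h_\mu-H$ vanishes by the very definition of $\Sigma_{\gs}$. On $\Sigma_{\gs}$ I invoke the conclusion of Theorem \ref{thm:converse}, whose hypotheses we have assumed: $\Sigma_{\gs}$ is $\gs$-invariant. Since each $Y\in\lgs$ generates the flow of a one-parameter subgroup of $\gs$, that flow preserves the open set $\Sigma_{\gs}\subset\Sigma$, and differentiating at time $0$ gives $Y_p\in T_p\Sigma$ for every $p\in\Sigma_{\gs}$; hence $\nu\cdot Y=0$ throughout $\Sigma_{\gs}$. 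The two cases together force $(h_\mu-H)\,\nu\cdot Y\equiv 0$ on all of $\Sigma$, so $\int_S e^{\mu}\,(h_\mu-H)\,\nu\cdot Y=0$.

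It then follows that $\ff_B(K)(Y)=\ff_B(K')(Y)$ for all homologous caps $K,K'$ and all $Y\in\lgs$, so $\ff_B$ descends to a well-defined map on $\hnsb$; linearity of the integrals makes it a homomorphism into $\lgs^{*}$, exactly as in Theorem \ref{thm:law}. I expect the main obstacle to be conceptual rather than computational: one must recognize that the single obstruction integral is annihilated not by global constancy of $h_\mu$, but by a \emph{complementary-support} phenomenon --- the factor $h_\mu-H$ is supported on $\Sigma_{\gs}$, while $\nu\cdot Y$ is killed there by the tangency that $\gs$-invariance of $\Sigma_{\gs}$ supplies. This is precisely the payoff of Theorem \ref{thm:converse}, repackaging its geometric conclusion as the analytic statement of global well-definition.
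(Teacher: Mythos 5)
Your proof is correct and follows essentially the same route as the paper: both arguments reduce well-definition to the vanishing of the residual integral $\int_S e^{\mu}(h_\mu-H)\,\nu\cdot Y$ and kill it pointwise by splitting $\Sigma$ into $h_\mu^{-1}(H)$ and $\Sigma_{\gs}$. The only cosmetic difference is that the paper cites the tangency Claim established inside the proof of Theorem \ref{thm:converse} directly, whereas you invoke the theorem's stated conclusion ($\gs$-invariance of $\Sigma_{\gs}$) and then recover that same tangency by differentiating the flow.
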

	
	\begin{proof}
		While the Theorem assumes constancy of $\,\ff_{B}\,$ only on a 
		\ggc\ set of caps, the proof then deduces that at every point $\,p\in\Sigma$,
		either $\,h_{\mu}=H$, or $\,Y\,$ belongs to $\,T_{p}\Sigma$. 
		In this case, the last integral in the volume constrained first-variation formula 
		(\ref{eqn:vcfvf}) clearly vanishes on any $(n-1)$-chain $S$ in $\,\Sigma$, so that 
		$\,\ff_{B}(K,Y)=\ff_{B}(K',Y)\,$ for any two homologous caps 
		$K, K'\in \mathcal Z(N, \Sigma, B)$.
	\end{proof}

	Let us henceforth agree that when $\,\gg\,$ is trivial, we call a \ggc\ set of caps simply
	\textbf{crowded}.
	
	\begin{cor}
		If $\,N\,$ is homogeneous, $\mu$ is constant, and on some crowded set of caps,
		the flux functional is well-defined for
		all Killing fields on $\,N$, then $\,\Sigma\,$ has mean curvature $\,h\equiv H\,$
		everywhere.
	\end{cor}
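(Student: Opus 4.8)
The plan is to invoke Theorem \ref{thm:converse} with the \emph{trivial} symmetry group $\gg$ and with $\gs$ taken to be the \emph{full} isometry group $\gn$, and then to exploit the transitivity of $\gn$ to annihilate the ``bad'' region $\Sigma_\gs$. First I would verify the hypotheses of the Theorem. Because $\mu$ is constant we have $d\mu\equiv 0$, so $h_\mu = h$; moreover every isometry of $N$ fixes the constant function $\mu$, so $\gs:=\gn$ is a legitimate $\mu$-preserving group. With $\gg$ trivial, $\gs$ centralizes $\gg$ automatically, and ``crowded'' coincides with ``\ggc.'' It then remains to convert the assumed \emph{well-definition} of the flux functional into the \emph{constancy} hypothesis that Theorem \ref{thm:converse} actually uses. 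By Definition \ref{def:crowded}, any two caps in a crowded set are homologous (their difference is required to be a boundary), so a flux functional that is well-defined on homology classes necessarily takes a single value across the entire crowded set $\cs$, for each $Y\in\lgs=\lgn$. Hence Theorem \ref{thm:converse} applies.

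Its conclusion is that
\[
\Sigma_\gs = \Sigma\setminus h_\mu^{-1}(H) = \Sigma\setminus h^{-1}(H)
\]
is $\gn$-invariant, and I would finish by showing this set is empty. Suppose not, and choose $p\in\Sigma_\gs$. The $\gn$-invariance just obtained gives $g\,p\in\Sigma_\gs$ for every $g\in\gn$, so the whole orbit $\gn\,p$ lies in $\Sigma_\gs\subset\Sigma$. But $N$ is homogeneous, so $\gn$ acts transitively and $\gn\,p = N$. This would force $N\subseteq\Sigma$, which is impossible: $\Sigma$ is an immersed $(n-1)$-dimensional hypersurface, hence of measure zero in the $n$-manifold $N$. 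The contradiction shows $\Sigma_\gs=\emptyset$, i.e. $h\equiv H$ throughout $\Sigma$, as claimed.

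The only genuinely delicate point is the bookkeeping in the first paragraph---confirming that ``well-defined flux functional'' really does supply the ``constant on $\cs$'' input demanded by Theorem \ref{thm:converse}. Everything afterward is a one-line dimension count: once $\Sigma_\gs$ is known to be $\gn$-invariant, transitivity of the isometry action leaves it nowhere to hide inside a proper hypersurface. Equivalently, one could bypass the orbit argument and appeal directly to the tangency Claim established within the proof of Theorem \ref{thm:converse}: at any $p\in\Sigma_\gs$ every Killing field would lie in $T_p\Sigma$, yet homogeneity makes $\{Y_p : Y\in\lgn\}$ span the full $n$-dimensional $T_pN$---again a contradiction by dimension.
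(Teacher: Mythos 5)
Your argument is correct and follows the paper's own route: take $\gg$ trivial and $\gs=\gn$ in Theorem \ref{thm:converse}, then observe that a homogeneous space admits no $\gn$-invariant hypersurface, forcing $\Sigma_{\gs}=\emptyset$. You simply supply more detail than the paper does (the orbit/dimension count for why no invariant hypersurface exists, and the reduction of well-definition to constancy on the crowded set), so no substantive difference.
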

	
	\begin{proof}
		With $\,\gg\,$ trivial in Theorem \ref{thm:converse},
		well-definition on all Killing fields makes $\,\Sigma'\,$ invariant
		under the entire isometry group $\,\gn$. But in a homogeneous space, all non-empty
		$\,\gn\,$-invariant sets have top dimension. So $\,\Sigma',$ having codimension one, 
		must be empty, forcing $\,h\equiv H\,$ throughout $\,\Sigma$. 
	\end{proof}
	
	When $\,\gs\subset\gn\,$ is a subgroup, we say that $\,N\,$ has  \textbf{cohomogeneity} $\,k\,$ 
	with respect to $\,\gs$ when the highest dimensional orbits of $\,\gs\,$ have codimension $k\,$ 
	in $\,N$. Cohomogeneity {zero} is the same as \emph{homogeneity}.
	
	\begin{cor}\label{cor:coho1}
		Suppose a real-analytic riemannian manifold $\,N\,$ has cohomogeneity one with respect to a
		$\mu$-preserving group $\gs$, and 
		on some crowded set of caps, the flux functional is well-defined on all of 
		$\,\lgs$.  Then either $\,h_{\mu}\equiv H$, or else $\,\Sigma\,$ is an orbit of $\,\gs$.
		Either way, $\,h_{\mu}\,$ is constant on $\,\Sigma$.
	\end{cor}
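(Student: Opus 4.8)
The plan is to read this off Theorem \ref{thm:converse} by taking $\,\gg\,$ trivial, exactly as anticipated in Remark \ref{rem:triv}. A trivial $\,\gg\,$ is centralized by every subgroup, and by Definition \ref{def:crowded} a \emph{crowded} set of caps is precisely a \ggc\ set in that case; moreover $\,\Sigma\,$ is vacuously $\gg$-invariant. Since the hypothesis asserts that the flux functional is well-defined---hence constant---on a crowded set of caps for every $\,Y\in\lgs\,$, Theorem \ref{thm:converse} applies directly and tells us that
\[
	\Sigma_{\gs}=\Sigma\setminus h_{\mu}^{-1}(H)
\]
is $\gs$-invariant. From here I would argue by dichotomy: either $\,\Sigma_{\gs}=\emptyset\,$, in which case $\,h_{\mu}\equiv H\,$ on all of $\,\Sigma\,$ and the first alternative holds, or $\,\Sigma_{\gs}\neq\emptyset\,$, in which case I will show $\,\Sigma\,$ must be a single $\gs$-orbit.

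So assume $\,\Sigma_{\gs}\neq\emptyset\,$. By construction it is open in the $(n-1)$-dimensional hypersurface $\,\Sigma\,$, and by the Theorem it is $\gs$-invariant, hence a union of $\gs$-orbits. The crux is that $\,\Sigma_{\gs}\,$ must contain a \emph{principal} orbit. Indeed, cohomogeneity one means the top orbit dimension is $\,n-1\,$, while the non-principal orbits sweep out a closed $\gs$-invariant set of dimension at most $\,n-2\,$; since $\,\Sigma_{\gs}\,$ is a nonempty open subset of the $(n-1)$-manifold $\,\Sigma\,$, it cannot lie inside that lower-dimensional set, so some $\,p\in\Sigma_{\gs}\,$ lies on an orbit $\,Q:=\gs\,p\,$ with $\,\dim Q=n-1\,$. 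Because $\,\Sigma_{\gs}\,$ is $\gs$-invariant we have $\,Q\subset\Sigma_{\gs}\subset\Sigma\,$, and since $\,\dim Q=\dim\Sigma\,$ the inclusion is a local diffeomorphism, making $\,Q\,$ \emph{open} in $\,\Sigma\,$. On the other hand the $\gs$-orbits are closed in $\,N\,$ (a fact already invoked in the proof of Theorem \ref{thm:converse}), so $\,Q\,$ is also \emph{closed} in $\,\Sigma\,$. As $\,\Sigma\,$ is connected, the nonempty clopen set $\,Q\,$ equals $\,\Sigma\,$. Finally, $\,\gs\,$ acts transitively on $\,Q=\Sigma\,$ by isometries fixing $\,\mu\,$, and such isometries preserve $\,h_{\mu}\,$; hence $\,h_{\mu}\,$ is constant on $\,\Sigma\,$, which is the second alternative.

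The main obstacle I anticipate is the dimension count in the second paragraph: the assertion that for a cohomogeneity-one action the non-principal orbits form a set of dimension at most $\,n-2\,$, so that the open piece $\,\Sigma_{\gs}\,$ of $\,\Sigma\,$ is forced to meet the principal stratum. This rests on the standard structure theory of cohomogeneity-one isometric actions, where the orbit space is a one-manifold with boundary and the non-principal orbits project to its (discrete) boundary; alternatively one can phrase it through lower semicontinuity of the orbit-dimension function. The remaining ingredients---closedness of orbits and the clopen-plus-connectedness argument---are routine, so I expect the proof to be quite short once that structural fact is in hand.
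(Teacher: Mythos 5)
Your proof is correct and takes essentially the same route as the paper's, which simply observes that in a cohomogeneity-one space the only connected $\gs$-invariant hypersurfaces are single $\gs$-orbits and then invokes connectedness of $\Sigma$; you are just spelling out the dimension-count and clopen arguments that the paper leaves implicit. One small caveat: in a cohomogeneity-one action the non-principal orbits need not all have dimension at most $n-2$ (exceptional orbits have dimension $n-1$), but this is harmless because your open-plus-closed-plus-connected step only uses that the orbit through the chosen point has dimension $n-1$, not that it is principal.
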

	
	\begin{proof}
		In an analytic ambient space, hypersurfaces with constant $\mu$-mean curvature are analytic
		\cite[5.2.16]{gmt}.
		Cohomgeneity one means the only connected $\gs$-invariant hypersurfaces are single orbits of 
		$\,\gs$, which clearly have constant $\mu$-mean curvature.
		Since $\,\Sigma\,$ is connected, the Corollary now follows from Theorem 
		\ref{thm:converse}.
	\end{proof}
	
	\subsection{Examples.}\label{eg1}
		Take $\,N=\R^{3}$, let $\gg\,$ be the circular group acting by rotation
		about the $x$-axis. The Killing field $Y=(1,0,0)$ generates translational flow along that
		axis, and $\gg$ commutes with this flow as required by Theorem \ref{thm:converse}. 
		The non-cylindrical Delaunay surfaces---\cmc\ 
		surfaces of revolution about the $x$-axis analyzed by C.~Delaunay in 1841---show that 
		Theorem \ref{thm:converse} may obtain with $\gg$-invariant hypersurfaces having 
		$\,h\equiv H\,$ and \textit{no} flow-invariant subset $\,\Sigma'$.
		
		Contrastingly, if we take $\,\Sigma\,$ to be any cylinder centered about the $x$-axis with 
		radius \emph{not} equal to $\,1/H$, we get an example with $\,\Sigma'=\Sigma$. 
		That is, $\,\Sigma\,$ has mean curvature $\,H\,$ {nowhere}, and yet the flux functional
		remains well-defined on $\,Y$, thanks to the global flow-invariance of $\,\Sigma$.
		
		Of course, the cylinder of radius $\,1/H\,$ about the $x$-axis has \emph{both} 
		$\,h\equiv H\,$ \emph{and} the extra translational symmetry.
		
		All these possibilities arise in the family of twizzlers too, as
		we shall shortly see.

	\subsection{Case study.} \label{eg:twiz3}(First integrals for twizzlers.)
		Consider the riemannian product $\,N:=\C\times\US^{1}_{R}$, where the complex plane $\,\C\,$ 
		and $\,\US^{1}_{R}\,$ (the circle  of radius $R$) have their standard metrics.
		Take $\,\mu\equiv 0$, and let $\,\gg\approx\US^{1}\,$ 
		act via screw-motion:
		\[
			[e^{i t}]\left(z,\,Re^{i\th}\right) = \left(e^{i t}z,\, Re^{i(t+\th)}\right)
		\]
		In this situation, each helical orbit of the $\gg$-action generates $\,H_{1}(N)\approx\Z$.  
		Let $\,\Sigma\subset N\,$ be any connected $\gg$-invariant surface, and with no loss of generality assume it does not contain
		the shortest orbit $\,\b:=\mathbf{0}\times\US^{1}_{R}$. Then $\,\b\,$ clearly generates
		a spine for $\,(N,\Sigma).$
		
		We can parametrize $\,\Sigma\,$ by letting $\,\gg\,$ act on an immersed 
		curve $\,\g:\R\to\C\times\{1\}\approx\C\,$ via the map 
		\begin{equation}\label{eq:X}
			X(u,v) = \left(e^{\i v}\g(u),\,Re^{\i v}\right).
		\end{equation}
		Assume the orientation
		of $\,\g\,$ makes the natural frame $\,\{X_{u},X_{v}\}\,$ \emph{positively} oriented
		along $\,\Sigma$.

		Now fix any point $\,p\,$ on the generating curve $\,\g$, and join it to 
		the origin in $\,\C\,$ by a line segment. This segment sweeps out a helicoidal 
		{cap} $K^{p}$, invariant under the $\gg$-action, and the reduced class of $K^{p}$
		in $\,\rc(N,\Sigma, B)\,$ is clearly independent of $\,p$.  One easily sees
		that as $p$ varies over $\,\g$, the resulting caps $\,K^{p}\,$ form a 
		{\ggc} set $\,\cs\,$ according to Definition \ref{def:crowded}.

		Now let $\,Y\,$ be the circular Killing field generating the purely ``horizontal'' 
		isometric flow $\,[e^{\i s}](z,Re^{\i\th}) = (e^{\i s}z,Re^{\i\th})$. 
		Note that $\,Y\,$ commutes with $\,\gg\,$ and preserves 
		$\,\mu$, as required by Theorem \ref{thm:converse}. 
		
		Finally, suppose that when we put $\,K=K^{p}\,$ and $\,\b\,$ as above in the
		the flux formula of Theorem \ref{thm:law}, the result is independent of $\,p$.
		
		Since $\,N\,$ has cohomgeneity one with respect to the extension of $\,\gg\,$ by
		the flow of $\,Y$, Corollary \ref{cor:coho1} dictates that \emph{either} $\,\Sigma\,$ 
		is a \cmc\ twizzler with $\,h\equiv H$, or else $\,\Sigma\,$ is an orbit of the 
		combined action, and thus a circular cylinder with $\,h\equiv 1/r\,$ 
		($r$ giving the radius of the cylinder; typically $\,1/r\ne H$).

		As an application of our theory, we now show that constancy of $\,\ff_{B}\,$ on the \ggc\
		set of caps $\,K^{p}\,$ described above ``explains'' the first-order ODE
		known to characterize generating curves of \cmc\ twizzlers, as mentioned in our introduction. 
		
		\begin{prop}\label{prop:fint}
			A non-circular immersed curve $\,\g\,$ in $\,\C\,$ generates a 
			twizzler in $\,\C\times \US^{1}_{R}\,$ with $\,h\equiv H\,$ if and only if for
			some $\,c\in\R$, it solves
		
			\begin{equation}\label{eqn:1rstInt}
				\frac{2\pi\,R^{2}\,(\dot\g\cdot\i\,\g)}
				     {\sqrt{R^{2}|\dot\g|^{2}+(\dot\g\cdot\g)^{2}}}
				- \pi\,R\,H\,|\g|^{2}=c
			\end{equation} 
		\end{prop}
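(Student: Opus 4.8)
The plan is to recognize the left side of (\ref{eqn:1rstInt}) as the value $\ff_{B}[\c](Y)$ of the flux functional on the pure $\C$-rotation field, and then to read the ``only if'' direction off Theorem \ref{thm:law} and the ``if'' direction off Corollary \ref{cor:coho1}. Recall that $\lgs$ is two-dimensional, spanned by the screw generator $V$ of $\gg$ and the $\C$-rotation $Y$, whose value at $(z,w)\in N$ is $Y=(\i z,0)$. Since flux is linear in the Killing field, it is constant on the $\gg$-crowded set $\cs=\{K^{p}\}$ for \emph{all} of $\lgs$ exactly when it is constant there for $V$ and for $Y$ separately, so I would reduce the whole statement to a single explicit flux computation.

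First I would dispose of $V$. Along $\Sigma$ the screw field equals the coordinate field $X_{v}$ of the parametrization (\ref{eq:X}), and along the cap $K^{p}$ (swept out by $\Phi(r,v)=(re^{\i v}\g(u_{0}),Re^{\i v})$, $r\in[0,1]$) it equals $\Phi_{v}$. Hence $V$ is tangent to the helix $\G_{p}$, so $V\cdot\eta=0$, and tangent to $K^{p}$, so $V\cdot\nu=0$; its flux vanishes identically, independently of $p$. It therefore suffices to compute the flux of $Y$ and match it with the left side of (\ref{eqn:1rstInt}). For this I record $X_{u}=(e^{\i v}\dot\g,0)$ and $X_{v}=(\i e^{\i v}\g,\i Re^{\i v})$, so that $|X_{u}|^{2}=|\dot\g|^{2}$, $|X_{v}|^{2}=|\g|^{2}+R^{2}$, and $X_{u}\cdot X_{v}=\dot\g\cdot\i\g$. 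Writing $W:=(0,\i Re^{\i v})$ for the circle-rotation field, one has $Y=X_{v}-W$ along $\Sigma$, so $Y\cdot\eta=-W\cdot\eta$ because $\eta\perp X_{v}$. A Gram--Schmidt computation of the unit conormal $\eta$ (using $X_{u}\cdot W=0$, $X_{v}\cdot W=R^{2}$, and the identity $|\dot\g|^{2}|\g|^{2}=(\dot\g\cdot\g)^{2}+(\dot\g\cdot\i\g)^{2}$) gives, with the orientations fixed above,
\[
	Y\cdot\eta=\frac{R^{2}(\dot\g\cdot\i\g)}{\sqrt{|\g|^{2}+R^{2}}\,\sqrt{R^{2}|\dot\g|^{2}+(\dot\g\cdot\g)^{2}}}.
\]

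Since $Y\cdot\eta$ is $v$-independent while the arclength element along $\G_{p}$ is $ds=\sqrt{|\g|^{2}+R^{2}}\,dv$, integrating over $v\in[0,2\pi]$ cancels the first radical and yields precisely the first term of (\ref{eqn:1rstInt}). For the cap term I would use $\int_{K^{p}}\nu\cdot Y\,dA=\int_{K^{p}}\iota_{Y}\,\mathrm{vol}_{N}$; as $Y=(\i z,0)$ we have $\iota_{Y}\mathrm{vol}_{N}=-\tfrac12\,d|z|^{2}\wedge ds$, and on $K^{p}$ one has $|z|^{2}=r^{2}|\g|^{2}$ and $ds=R\,dv$, so that $\int_{K^{p}}\nu\cdot Y\,dA=\pi R|\g|^{2}$ (orientations as in Definition \ref{def:cap}). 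Combining, $\ff_{B}[\c](Y)$ is exactly the left side of (\ref{eqn:1rstInt}).

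With this identification the proposition follows quickly. If $\g$ generates a twizzler with $h\equiv H$, then $h_{\mu}\equiv H$ (as $\mu\equiv 0$), so Theorem \ref{thm:law} makes $\ff_{B}$ well-defined, i.e.\ constant on the class of the $K^{p}$; hence the left side of (\ref{eqn:1rstInt}) is a constant $c$. Conversely, if $\g$ solves (\ref{eqn:1rstInt}) then the $Y$-flux is constant on $\cs$, and since the $V$-flux vanishes, the flux of every field in $\lgs$ is constant on $\cs$. As $N$ has cohomogeneity one with respect to $\gs$, Corollary \ref{cor:coho1} yields either $h_{\mu}\equiv H$ or that $\Sigma$ is a $\gs$-orbit; the latter would force $\g$ to be a circle about the origin, excluded by hypothesis, so $h\equiv H$. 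I expect the main obstacle to be purely computational bookkeeping: pinning down the unit conormal $\eta$ and the cap normal $\nu$ with orientations compatible with Definition \ref{def:cap} and the positively oriented frame $\{X_{u},X_{v}\}$, so that the two contributions carry exactly the signs displayed in (\ref{eqn:1rstInt}); the conceptual input (Theorem \ref{thm:law} and Corollary \ref{cor:coho1}) is then immediate.
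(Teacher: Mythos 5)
Your proposal is correct and follows essentially the same route as the paper: identify the left-hand side of (\ref{eqn:1rstInt}) as the flux of the circular Killing field across the $\gg$-invariant caps $K^{p}$, compute the two flux integrals explicitly, and read the two implications off the conservation law and Corollary \ref{cor:coho1}, using non-circularity of $\g$ to exclude the orbit alternative. Your added check that the screw field's flux vanishes identically --- so that constancy for $Y$ alone gives constancy on all of $\lgs$, as Corollary \ref{cor:coho1} formally requires --- is a small refinement the paper leaves implicit, and the sign discrepancies in your sketch (your $Y$ is the negative of the paper's) are harmless since only the relative sign of the two terms matters and $c$ is arbitrary.
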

		
		\begin{proof}
			Since we assume $\,\g\,$ is not circular, Theorem \ref{thm:law} and Corollary \ref{cor:coho1}, 
			as noted above, tell us that $\,h\equiv H\,$ if and only if the flux of the circular 
			vectorfield\smallskip
			\[
				Y_{(z,\tau)} = -\left(\i z, 0\right)
			\]
			
			across $\,K^{p}\,$ is independent of $\,p$. That is,\smallskip
			\begin{equation}\label{eqn:phi=c}
				\ff_{B}\l(K^{p},Y\r) \equiv c\quad\text{for all $\,p\in\g\,$}.
			\end{equation}
			
			Equation (\ref{eqn:1rstInt}) merely evaluates this assertion.
						
			To reach (\ref{eqn:1rstInt}) from (\ref{eqn:phi=c}), we temporarily fix a point
			$\,p=\g(t)\,$ on the generating curve $\,\g$, and specify an orientation on 
			the  cap $\,K^{p}$, by declaring the frame field 
			$\,\{K_{u},K_{v}\}\,$ associated with the parametrization
			\begin{equation*}
				K(u,v) 
				=
				\left(u\,e^{iv}p,\, R\,e^{iv}\right),\quad
				(u,v)\in\left(0,1\right)\times\left(0,2\pi\right),
			\end{equation*} 
			to be {positively} oriented.
			
			Now consider the second integral in the flux formula (\ref{eqn:flux})---the one that 
			pairs $\,Y\,$ with the unit normal $\,\nu\,$ along $\,K^{p}$.  The correctly oriented
			unit normal will be a positive multiple of
			\[
				K_{u}\wedge K_{v} = \left(-R\,\i\,e^{iv}p,\, u\,|p|^{2}\,\i\,e^{iv}\right).
			\]
			The length of $K_{u}\wedge K_{v}$ is actually irrelevant: we divide by it to 
			normalize, but then multiply it back in as the Jacobian in the flux integral, namely
			\begin{equation*}\label{eqn:2ndFI}
				\int_{K^{p}} \nu\cdot Y 
				\ =\ 
				\int_{0}^{2\pi}\int_{0}^{1} 
				\left(K_{u}\wedge K_{v}\right)\cdot Y\big|_{K(u,v)}
				\ du\,dv\\
			\end{equation*} 
			
			At $\,K(u,v)$, we have $\,Y = -\left(u\,\i\,e^{\i v}p\,,\,0\right)$, so
			the corresponding flux term evaluates easily to
			\begin{equation}\label{eqn:flx2}
				H\int_{K^{p}} \nu\cdot Y 
				\ =\ 
				2\pi\,HR\,|p|^{2}\int_{0}^{1} u\ du \ =\ \pi\,HR\,|p|^{2}
			\end{equation} 
			
			Now consider the other integral in the flux formula (\ref{eqn:flux}), the
			integral over $\,\Gamma := \partial K - \beta$, where $\,K^{p}\,$ 
			meets $\Sigma$. This curve is the helical $\gg$-orbit of $\,p$, 
			and one easily computes its length as
			\[
				\left|\G\right| = 2\pi\sqrt{R^{2}+|p|^{2}}\ .
			\]
			Our chosen orientation of $K$ induces an orientation on $\Gamma$.  Since $K_u$ at $\Gamma$ is parallel to the outer conormal in $K$, the velocity $\Gamma'$ of $\Gamma$ is equal to a positive multiple of $X_v$.  
			The outer conormal in $\Sigma$ along $\G$, which we called $\,\eta$, must then give the pair $\,\{\eta,\G'\}\,$ positive orientation, so
			we can obtain $\,\eta\,$ by orthonormalizing $\,X_{u}\,$ along $\,\G$, i.e., 
			by normalizing
			\[
				|X_{v}|^{2}X_{u}+\left(X_{u}\cdot X_{v}\right) X_{v}\ .
			\]
			Both $\eta$ and $Y$ are $\gg$-invariant, making $\,\eta\cdot Y\,$ constant along
			$\,\G$, and careful calculation then shows that indeed,
			\[
				\eta\cdot Y 
				\equiv 
				\frac{-R^{2}\,\dot\g\cdot\i\,p}
				     {\sqrt{R^{2}+|p|^{2}}\sqrt{(R^{2}+|p|^{2})|\dot\g|^{2}-(\g\cdot\i\,p)^{2}}}
			\]
			where we evaluate $\,\dot\g\,$ at $\,p$.  We can simplify the second square root
			in the denominator here via the elementary identity 
			\[
				\left(\dot\g\cdot\i\,p\right)^{2}
				=
				\left|\dot\g\right|^{2}\left|p\right|^{2}-\left(\dot\g\cdot p\right)^{2}
			\]
			This lets us express the conormal flux integral as
			
			\begin{equation}\label{eqn:flx1}
				\int_{\G}\eta\cdot Y = 
				-\frac{2\pi\,R^{2}\,(\dot\g\cdot\i\,p)}
				     {\sqrt{R^{2}|\dot\g|^{2}+(\dot\g\cdot p)^{2}}}
			\end{equation}
			Setting $\,p=\g(t)\,$ and recalling (\ref{eqn:flux}), we now get $\,\ff_{B}(K^{p},Y)\,$ by 
			adding (\ref{eqn:flx2}) to (\ref{eqn:flx1}).
		\end{proof}
	
	\begin{rem}
		If we parametrize a convex arc of the generating curve $\,\g\,$ using its 
		\textbf{support function}, namely
		\[
			k(t) := \sup_{\th}\g(t)\cdot e^{\i\th}
		\]
		then
		\[
			\g(t) = \left(k(t) + \i\,\dot k(t)\right)e^{\i\,t}
		\]
		
		It now follows from Proposition \ref{prop:fint} that when $\,\g\,$ generates a
		pitch-$R$ twizzler with $\,h\equiv H$, its support function satisfies a
		simple non-linear ODE:
		\[
		\frac{2R\,k}{\sqrt{R^{2}+\dot k^{2}}}-H\left(k^{2}+\dot k^{2}\right)=C
		\]
		
		In other words, the phase portrait of $\,k\,$ lies on one of the ``heart-shaped'' level
		curves of the function
		\[
			F(x,y) := \frac{2R\,x}{\sqrt{R^{2}+y^{2}}} - H\left(x^{2}+y^{2}\right)
		\]
		In \cite{p1} and \cite{p2}, Perdomo based his dynamical characterization of twizzler 
		generating curves and his study of their moduli space, on this observation.
	\eob
	\end{rem}
	
	\begin{rem}[Twizzlers in other 3D-space forms]\label{rem:etc}
		It is natural to see the curve $\,\g\,$ in our case study \ref{eg:twiz3} above as the 
		projection of the hypersurface $\,\Sigma\,$ into the \emph{orbit space} $\,N/\gg\approx \C$. 
		The length of the orbit above $\,z\in\C\,$ is easily computed as 
		$\,|\G_{z}|=2\pi\sqrt{R^{2}+|z|^{2}}$, and if we adopt this as our density function, i.e.,  
		$\,e^{\mu(z)}=|\G_{z}|\,$, on the orbit space (cf.~Definition \ref{def:mass}), 
		a simple reworking of Proposition \ref{prop:fint} re-interprets the first integral there as 
		the condition for $\,\g\,$ to have $\,h_{\mu}\equiv H\,$ as a ``hypersurface'' in the 
		two-dimensional orbit space.
		
		Similarly, one can seek \cmc\ ``twizzlers'' in the 3-sphere $\,\US^{3}\subset\R^{4}\,$
		invariant under one of the helical $\,(k,l)\,$ ``torus knot'' circle
		actions given by 
		\[
			[e^{\i t}]\left(z,w\right) = \left(e^{\i kt}z,\,e^{\i lt}w\right)
		\]
		This is the standard Hopf action when $\,k=l=1$, in which case the orbit space 
		$\,\US^{3}/\gg\,$ is of course the standard 2-sphere $\,\US^{2}$.  More generally,
		when $\mathrm{gcd}(k,l)=1$, on can realize the orbit space as an eccentric ``football''
		or ``teardrop''shaped surface of revolution in $\,\R^{3}$, smooth except for conical singularities
		at one or both ends. The $\gg$-invariant \cmc\ twizzlers in $\,\US^{3}\,$ then correspond
		one-to-one with curves having constant $\mu$-mean curvature in the the orbit space, where
		the density function is again given by orbit-length: $\,e^{\mu(p)}=|\pi^{-1}(p)|\,$ for 
		$\,p\,$ in the orbit space. By Theorem \ref{thm:converse}, these $\,h_{\mu}\equiv H\,$ curves 
		are the precisely the non-circular curves that conserve flux
		along the Killing fields that generate the rotational symmetry of the orbit space. 
		It is then straightforward to use this fact, as in Proposition \ref{prop:fint}, to derive
		the first integral they satisfy. See \cite{e} for the resulting expression. We should
		note here that the special case
		$\,h_{\mu}\equiv 0\,$ (\textit{minimal} twizzlers in $\,S^{3}$) was analyzed using Hamilton-Jacobi
		theory in \cite[Chap. IV]{hl}.
		
		Analogous helical actions exist in the hyperbolic space form $\mathbb{H}^{3}$, and
		the resulting \cmc\ twizzlers have a first integral derivable in precisely
		the same way. The reader may consult \cite{e} for a description of the group action 
		and the resulting first integral in this case as well.
	\eob
	\end{rem}
	

\section*{Acknowledgments} 
The Summer 2011 Math REU program at Indiana University, Bloomington initiated and supported this
work. We gratefully acknowledge the National Science Foundation for funding that program.


\end{document}